\definecolor{myblue}{rgb}{.9, .9, 1}
\newcommand*\mybluebox[1]{%
	\colorbox{myblue}{\hspace{1em}#1\hspace{1em}}
}
\crefname{equation}{}{equations}
\crefname{chapter}{Appendix}{chapters}
\crefname{item}{}{items}
\crefname{figure}{Figure}{figures}
\def\namedlabel#1#2{\begingroup
	\def\@currentlabel{#2}%
	\label{#1}\endgroup
}
\def\th@plain{%
	\thm@notefont{}
	\itshape 
}
\def\th@definition{%
	\thm@notefont{}
	\normalfont 
}
\renewenvironment{proof}[1][\proofname]{\par
	\normalfont
	\topsep0\p@\@plus3\p@ \trivlist
	\item[\hskip\labelsep\itshape
	#1\@addpunct{.}]\ignorespaces
}{%
\qed\endtrivlist
}
\newtheorem{theorem}{Theorem}[section]
\newtheorem{lemma}{Lemma}[section]
\newtheorem{corollary}{Corollary}[section]
\newtheorem{proposition}{Proposition}[section]
\newtheorem{fact}{Fact}[section]
\theoremstyle{definition}
\theoremstyle{definition}
\newtheorem{example}{Example}[section]
\theoremstyle{definition}
\newtheorem{remark}{Remark}[section]
\setlist[enumerate]{nosep}
\newcommand{\menge}[2]{\{{#1}~\big |~{#2}\}} 
\newcommand{\Menge}[2]{\left\{{#1}~\Big|~{#2}\right\}}
\newcommand{\scal}[2]{\langle {#1},{#2} \rangle}
\newcommand{\NN}{\ensuremath{\mathbb N}}
\newcommand{\nnn}{\ensuremath{{n\in{\mathbb N}}}}
\newcommand{\RR}{\ensuremath{\mathbb R}}
\newcommand{\RP}{\ensuremath{\mathbb{R}_+}}
\newcommand{\RPP}{\ensuremath{\mathbb{R}_{++}}}
\newcommand{\argmin}{\ensuremath{\operatorname*{argmin}}}
\newcommand{\Fix}{\ensuremath{\operatorname{Fix}}}
\newcommand{\Id}{\ensuremath{\operatorname{Id}}}
\begin{document}

\title{The Douglas--Rachford algorithm\\
	for a hyperplane and a doubleton}

\author{
	Heinz H.\ Bauschke\thanks{
		Mathematics, University of British Columbia, Kelowna, B.C.\ V1V~1V7, Canada. 
		E-mail: \texttt{heinz.bauschke@ubc.ca}.},~
	Minh N.\ Dao\thanks{
		CARMA, University of Newcastle, Callaghan, NSW 2308, Australia.
		E-mail: \texttt{daonminh@gmail.com}.}~~~and~
	Scott B.\ Lindstrom\thanks{
		CARMA, University of Newcastle, Callaghan, NSW 2308, Australia.
		E-mail: \texttt{scott.lindstrom@uon.edu.au}.}
}

\date{April 24, 2018}

\maketitle

\begin{abstract}
The Douglas--Rachford algorithm is a popular algorithm for
solving both convex and nonconvex feasibility problems. 
While its behaviour is settled in the convex inconsistent case,
the general nonconvex inconsistent case is far from being fully
understood. In this paper, we focus on the most simple nonconvex
inconsistent case: when one set is a hyperplane and the other a
doubleton (i.e., a two-point set). We present a characterization of cycling in this
case which --- somewhat surprisingly --- depends on whether 
the ratio of the distance of the points to the hyperplane is
rational or not. Furthermore, we provide closed-form expressions
as well as several concrete examples which illustrate the
dynamical richness of this algorithm.
\end{abstract}

{\small
\noindent
{\bfseries 2010 Mathematics Subject Classification:}
{Primary:  
47H10, 
49M27; 
Secondary:  
65K05, 
65K10, 
90C26.
} 

\noindent {\bfseries Keywords:}
closed-form expressions,
cycling,
Douglas--Rachford algorithm,
feasibility problem,
finite set,
hyperplane,
method of alternating projections,
projector,
reflector
}

\section{Introduction}
\label{s:intro}

The Douglas--Rachford (DR) algorithm \cite{DR56} is a popular algorithm for
finding minimizers of the sum of two functions, defined on a
real Hilbert space and possibly nonsmooth.
Its convergence properties are fairly well understood in the 
case when the function are convex; see
\cite{LM79},
\cite{EB92},
\cite{Com04}, 
\cite{BCL04}, 
\cite{BDM16},
and \cite{BM17}.
When specialized to
indicator functions, 
the DR algorithm aims to solve a feasibility 
problem. 

The \emph{goal} of this paper is to analyze an instructive --- and 
perhaps the most simple ---
nonconvex setting: when one set is a hyperplane and the other
is a doubleton (i.e., it consists of
just two distinct points). 
Our analysis reveals interesting dynamic behaviour whose
\emph{periodicity} depends on whether or not a certain ratio of distances
is rational (Theorem~\ref{t:2points}).
We also provide \emph{explicit closed-form expressions} for the
iterates in various circumstances (Theorem~\ref{t:closedform}). 
Our work can be regarded as complementary to 
the recently rapidly growing body of works on the DR 
algorithm in
nonconvex settings including 
\cite{ERT07},
\cite{BS11},
\cite{HL13},
\cite{BN14},
\cite{ABT16}, 
\cite{Pha16},
and \cite{DT17}.

The remainder of the paper is organized as follows.
In Section~\ref{s:setup},
we recall the necessary background material to start our
analysis. 
The case when one set contains not just $2$ but finitely many
points is considered in Section~\ref{s:finite}. 
Section~\ref{s:cycling} provides a characterization of when
cycling occurs, while
Section~\ref{s:closed-form} presents closed-form expressions and
various examples.
We conclude the paper with Section~\ref{s:conclusion}.

\section{The set up}
\label{s:setup}

Throughout we assume that
\begin{empheq}[box=\mybluebox]{equation}
\text{$X$ is a finite-dimensional real Hilbert space}
\end{empheq}
with inner product $\scal{\cdot}{\cdot}$ and induced norm $\|\cdot\|$, and 
\begin{empheq}[box=\mybluebox]{equation}
\text{$A$ and $B$ are nonempty closed subsets of $X$}.
\end{empheq}
To solve the feasibility problem 
\begin{empheq}[box=\mybluebox]{equation}
\label{e:prob}
\text{find a point in $A\cap B$},
\end{empheq}  
we employ the \emph{Douglas--Rachford algorithm} (also called \emph{averaged alternating reflections}) 
that uses the \emph{DR operator}, associated with the ordered pair $(A, B)$,
\begin{empheq}[box=\mybluebox]{equation}
T :=\tfrac{1}{2}(\Id+R_BR_A)
\end{empheq}
to generate a \emph{DR sequence} $(x_n)_\nnn$ with starting point $x_0 \in X$ by
\begin{empheq}[box=\mybluebox]{equation}
\label{e:DRAseq}
(\forall\nnn)\quad x_{n+1} \in Tx_n,
\end{empheq}
where $\Id$ is the identity operator, $P_A$ and $P_B$ are the projectors, 
and $R_A :=2P_A -\Id$ and $R_B :=2P_B -\Id$ are the reflectors with respect to $A$ and $B$, respectively.
Here the projection $P_Ax$ of a point $x \in X$ is the nearest point of $x$ in the set $A$, i.e.,
\begin{equation}
P_Ax :=\argmin_{a \in A} \|x -a\| =\menge{a \in A}{\|x -a\| =d_A(x)},
\end{equation} 
where $d_A(x) := \min_{a \in A} \|x -a\|$ is the distance from $x$ to the set $A$.
Note from \cite[Corollary~3.15]{BC17} that 
closedness of the set $A$ is necessary and sufficient for $A$ being proximinal, 
i.e., $(\forall x\in X)$ $P_Ax\neq \varnothing$.
According to \cite[Theorem~3.16]{BC17},
if $A$ and $B$ are convex, then $P_A$, $P_B$ and hence $T$ are single-valued.
We also note that
\begin{equation}
(\forall x \in X)\quad Tx =\tfrac{1}{2}(\Id+R_BR_A)x =\menge{x -a +P_B(2a -x)}{a \in P_Ax},
\end{equation}
and if $P_A$ is single-valued then
\begin{equation}
\label{e:PAsingle}
T =\tfrac{1}{2}(\Id+R_BR_A) =\Id -P_A +P_BR_A.
\end{equation}
For further information on the DR algorithm 
in the classical case
(when $A$ and $B$ are both convex),
see 
\cite{LM79},
\cite{Com04}, 
\cite{BCL04}, 
\cite{BM17}, 
and \cite{BDNP16b}.
Results complementary to the rapidly increasing body of works on the
DR algorithm in nonconvex settings can be found in 
\cite{BN14},
\cite{Pha16},
\cite{DP16},
\cite{BLSSS17},
\cite{LLS17}, 
\cite{LSS17},
\cite{DP17}, and the references therein. 


The notation and terminology used is standard and follows, e.g., \cite{BC17}.
The nonnegative integers are $\NN$, the positive integers are $\NN^*$, and the real numbers are $\RR$, 
while $\RP := \menge{x \in \RR}{x \geq 0}$ and $\RPP := \menge{x \in \RR}{x >0}$.
We are now ready to start deriving the results we announced in
Section~\ref{s:intro}. 

\section{Hyperplane and finitely many points}
\label{s:finite}

We focus on the case when $B$ is a finite set, and we start with the following observation.
\begin{lemma}
\label{l:notcvg}
Suppose that $A$ is convex, that $B$ is finite, and that $A\cap B =\varnothing$. 
Let $(x_n)_\nnn$ be a DR sequence with respect to $(A, B)$.
Then $(x_n)_\nnn$ is not convergent.
\end{lemma}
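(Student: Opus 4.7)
The plan is to argue by contradiction. Suppose that the DR sequence $(x_n)_\nnn$ converges to some limit $\bar x \in X$. Because $A$ is convex and closed, the projector $P_A$ is single-valued and (firmly) nonexpansive, hence continuous. Therefore $P_A x_n \to P_A \bar x$, and consequently $R_A x_n \to R_A \bar x$.

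Next, I would use the formula \cref{e:PAsingle}, which is applicable since $P_A$ is single-valued: write
\begin{equation}
x_{n+1} = x_n - P_A x_n + b_n,
\quad\text{where } b_n \in P_B(R_A x_n) \subseteq B.
\end{equation}
Rearranging gives $b_n = x_{n+1} - x_n + P_A x_n$. Since $(x_n)_\nnn$ is convergent we have $x_{n+1} - x_n \to 0$, and combined with $P_A x_n \to P_A \bar x$ this yields $b_n \to P_A \bar x$.

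Now I would invoke finiteness of $B$. Since each $b_n$ lies in the finite set $B$, and $(b_n)_\nnn$ converges, the sequence $(b_n)_\nnn$ must be eventually constant, with its (eventual) value lying in $B$. In particular the limit $P_A \bar x$ belongs to $B$. But $P_A \bar x \in A$ by definition of the projector, so $P_A \bar x \in A \cap B$, contradicting the assumption $A \cap B = \varnothing$.

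The conceptual content of the argument is minimal once one writes down the right update formula; the two ingredients that do the work are continuity of $P_A$ (from convexity of $A$) and the fact that a convergent sequence in a finite set is eventually constant. The only step that requires a small amount of care is the choice of $b_n$ from the possibly set-valued map $P_B \circ R_A$, but the defining inclusion $x_{n+1} \in T x_n$ directly furnishes such a selection, so no genuine obstacle arises.
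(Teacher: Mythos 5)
Your proposal is correct and follows essentially the same argument as the paper: define $b_n = x_{n+1} - x_n + P_A x_n \in P_B R_A x_n \subseteq B$ via \cref{e:PAsingle}, use continuity of $P_A$ (from convexity of $A$) and $x_{n+1}-x_n\to 0$ to get $b_n \to P_A\bar x$, then use finiteness of $B$ to conclude $P_A\bar x \in A\cap B$, a contradiction. No differences worth noting.
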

\begin{proof}
Since $A$ is convex, $P_A$ is single-valued and continuous on $X$.
By \eqref{e:PAsingle}, $T =\frac{1}{2}(\Id +R_BR_A) =\Id -P_A +P_BR_A$, and hence
\begin{equation}
(\forall\nnn)\quad b_n :=x_{n+1} -x_n +P_Ax_n \in P_BR_Ax_n \subseteq B.
\end{equation}
Suppose that $x_n \to x \in X$. Then $b_n \to P_Ax$.
But $(b_n)_\nnn$ lies in $B$ and $B$ is finite, there exists $n_0 \in \NN$ such that
$(\forall n\geq n_0)$ $b_n =b \in B$. 
We obtain $P_Ax =b \in A\cap B$, which contradicts the assumption that $A\cap B =\varnothing$.
\end{proof}

From here onwards, we assume that $A$ is a hyperplane and $B$ is
a finite subset of $X$ containing $m$ pairwise distinct vectors; 
more specifically,
\begin{subequations}
\begin{empheq}[box=\mybluebox]{equation}
A= \{u\}^\perp \quad\text{with}\quad u\in X, \|u\|= 1
\end{empheq}
and 
\begin{empheq}[box=\mybluebox]{equation}
\label{e:B}
B= \{b_1, \dots, b_m\}\subseteq X \quad\text{with}\quad \scal{b_1}{u}\leq \cdots \leq \scal{b_m}{u}.
\end{empheq}
\end{subequations}

\begin{fact}
\label{f:A}
Let $x\in X$. Then the following hold:
\begin{enumerate}
\item\label{f:A_P} 
$P_Ax= x- \scal{x}{u}u$.
\item\label{f:A_R} 
$R_Ax= x- 2\scal{x}{u}u$.
\item\label{f:A_d} 
$d_A(x)= |\scal{x}{u}|$.
\end{enumerate}
\end{fact}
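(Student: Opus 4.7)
My plan is to treat this as a routine verification that leverages the fact that $A=\{u\}^\perp$ is a closed linear subspace whose orthogonal complement is $A^\perp=\mathrm{span}\{u\}$, together with the standard characterization of the projection onto a closed subspace: $p=P_Ax$ if and only if $p\in A$ and $x-p\in A^\perp$. Since every assertion reduces to a direct computation under the hypothesis $\|u\|=1$, no serious obstacle is anticipated; the only thing to be careful about is to use $\|u\|=1$ in the right places.

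For \ref{f:A_P}, I would set $p:=x-\scal{x}{u}u$ and verify the two conditions separately. First, $\scal{p}{u}=\scal{x}{u}-\scal{x}{u}\|u\|^2=0$, so $p\in A$. Second, $x-p=\scal{x}{u}u\in\mathrm{span}\{u\}=A^\perp$. By the projection characterization for closed linear subspaces (see, e.g., \cite{BC17}), this gives $P_Ax=p=x-\scal{x}{u}u$.

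For \ref{f:A_R}, I would simply substitute \ref{f:A_P} into the definition $R_A=2P_A-\Id$ to obtain
\begin{equation*}
R_Ax=2P_Ax-x=2\bigl(x-\scal{x}{u}u\bigr)-x=x-2\scal{x}{u}u.
\end{equation*}
For \ref{f:A_d}, I would invoke the general identity $d_A(x)=\|x-P_Ax\|$ (which holds since $A$ is nonempty, closed and convex, so $P_Ax$ is the unique minimizer in the definition of $d_A$) and compute
\begin{equation*}
d_A(x)=\|x-P_Ax\|=\|\scal{x}{u}u\|=|\scal{x}{u}|\,\|u\|=|\scal{x}{u}|,
\end{equation*}
again using $\|u\|=1$ at the final step. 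This completes all three items.
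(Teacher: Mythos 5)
Your proof is correct and is essentially the same argument as the paper's: the paper simply cites \cite[Example~2.4(i)]{BD17} for the projection formula and then derives \ref{f:A_R} and \ref{f:A_d} from the identities $R_A=2P_A-\Id$ and $d_A(x)=\|x-P_Ax\|$, exactly as you do. The only difference is that you verify \ref{f:A_P} directly via the characterization $p=P_Ax \iff p\in A,\ x-p\in A^\perp$ (using $\|u\|=1$), which makes the argument self-contained rather than relying on the cited example; this is a harmless and correct substitution.
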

\begin{proof}
This follows from \cite[Example~2.4(i)]{BD17} with noting that 
$R_Ax= 2P_Ax- x$ and that $d_A(x)= \|x- P_Ax\|$.
\end{proof}

Let $(x_n)_\nnn$ be a DR sequence with respect to $(A, B)$ with starting point $x_0\in X$. 
Since $P_A$ is single-valued, we derive from \eqref{e:PAsingle} that 
\begin{equation}
(\forall n\in \NN^*)\quad 
x_n- x_{n-1}+ P_Ax_{n-1}\in Tx_{n-1}- x_{n-1}+ P_Ax_{n-1}= P_BR_Ax_{n-1}\subseteq B.
\end{equation}
Let us set
\begin{empheq}[box=\mybluebox]{equation}
\label{e:b_kn}
(\forall n\in \NN^*)\quad b_{k(n)}:= x_n- x_{n-1}+ P_Ax_{n-1}\in P_BR_Ax_{n-1}\subseteq B 
\text{~with~} k(n) \in \{1, \dots, m\}.
\end{empheq}
The following lemma shows that the subsequence $(x_n)_{n\in
\NN^*}$ lies in the union of the lines through the points in $B$
with a common direction vector $u$. 

\begin{lemma}
\label{l:lines}
For every $n\in \NN^*$, 
\begin{equation}
x_n= \scal{x_{n-1}}{u}u+ b_{k(n)} 
\quad\text{and}\quad \scal{x_n}{u}= \scal{x_{n-1}}{u}+ \scal{b_{k(n)}}{u}, 
\end{equation}
where $k(n)\in \{1, \dots, m\}$. Consequently, the subsequence $(x_n)_{n\in \NN^*}$ lies in the union of finitely many (affine) lines:
\begin{equation}
B+ \RR u=\bigcup_{b\in B} (b+ \RR u)= \menge{b+ \lambda u}{b\in B, \lambda\in \RR}.
\end{equation}  
\end{lemma}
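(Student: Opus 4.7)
The plan is to unwind the definition of $b_{k(n)}$ from \eqref{e:b_kn} and apply the explicit formula for $P_A$ from Fact~\ref{f:A}\eqref{f:A_P}; everything then collapses to direct substitution. There is no real obstacle here, only bookkeeping.

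First I would rewrite \eqref{e:b_kn} as
\[
x_n = x_{n-1} - P_A x_{n-1} + b_{k(n)}.
\]
By Fact~\ref{f:A}\eqref{f:A_P}, $P_A x_{n-1} = x_{n-1} - \scal{x_{n-1}}{u}u$, so $x_{n-1} - P_A x_{n-1} = \scal{x_{n-1}}{u}u$. Substituting this gives the first identity
\[
x_n = \scal{x_{n-1}}{u}u + b_{k(n)}.
\]

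Next I would take the inner product of both sides with $u$ and use $\|u\| = 1$, i.e.\ $\scal{u}{u}=1$, to obtain
\[
\scal{x_n}{u} = \scal{x_{n-1}}{u} + \scal{b_{k(n)}}{u},
\]
which is the second identity.

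Finally, the inclusion follows immediately: from the first identity, $x_n = b_{k(n)} + \scal{x_{n-1}}{u}u \in b_{k(n)} + \RR u$, and since $b_{k(n)} \in B$, we have $x_n \in B + \RR u$ for every $n \in \NN^*$, which is the desired union-of-lines description.
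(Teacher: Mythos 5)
Your proposal is correct and follows essentially the same route as the paper: rearrange \eqref{e:b_kn} to $x_n = x_{n-1} - P_A x_{n-1} + b_{k(n)}$, substitute $P_A x_{n-1} = x_{n-1} - \scal{x_{n-1}}{u}u$ from Fact~\ref{f:A}\ref{f:A_P}, and take the inner product with $u$ using $\|u\|=1$. The concluding union-of-lines observation is handled the same way in both arguments.
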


\begin{proof}
By combining \eqref{e:b_kn} with Fact~\ref{f:A}\ref{f:A_P},
\begin{equation}
\label{e:x+}
(\forall n\in \NN^*)\quad x_n= x_{n-1}- P_Ax_{n-1}+ b_{k(n)}= \scal{x_{n-1}}{u}u+ b_{k(n)}.
\end{equation}
Taking the inner product with $u$ yields
\begin{equation}
\label{e:x+,u}
(\forall n\in \NN^*)\quad \scal{x_n}{u}= \scal{\scal{x_{n-1}}{u}u+ b_{k(n)}}{u}= \scal{x_{n-1}}{u}+ \scal{b_{k(n)}}{u},
\end{equation} 
which completes the proof.
\end{proof}

\begin{proposition}
\label{p:mpoints}
Exactly one of the following holds. 
\begin{enumerate}
\item 
\label{p:mpoints_finite}
$B$ is contained in one of 
the two closed halfspaces induced by $A$.   
Then either {\rm (a)} the sequence $(x_n)_\nnn$ converges finitely to
a point $x \in \Fix T$ and $P_Ax \in A\cap B$, 
or {\rm (b)} $A\cap B= \varnothing$ and $\|x_n\| \to +\infty$ in which
case $(P_Ax_n)_\nnn$ converges finitely to a best approximation
solution $a \in A$ relative to $A$ and $B$ in the sense that
$d_B(a)= \min d_B(A)$.
\item 
\label{p:mpoints_bounded}
$B$ is not contained in one of the two closed halfspaces induced by $A$.  
Then the sequence $(x_n)_\nnn$ is bounded. If
additionally $A\cap B= \varnothing$, then $(x_n)_\nnn$ is not
convergent and  
\begin{equation}
(\forall\nnn)\quad \|x_n- x_{n+1}\| \geq \min d_A(B)> 0.
\end{equation}
\end{enumerate}
\end{proposition}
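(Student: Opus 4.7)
The plan is to exploit the recurrence $\scal{x_n}{u}=\scal{x_{n-1}}{u}+\scal{b_{k(n)}}{u}$ from Lemma~\ref{l:lines}, together with the identity
\begin{equation}
\|R_Ax_n-b_j\|^2 = \|x_n-b_j\|^2+4\scal{x_n}{u}\scal{b_j}{u},
\end{equation}
obtained by expanding $R_Ax_n=x_n-2\scal{x_n}{u}u$. Using the orthogonal decomposition $x_n=\scal{x_n}{u}u+P_Ax_n$ and noting that $P_Ax_n=P_Ab_{k(n)}$ (for $n\geq 1$) lies in the bounded finite set $P_AB$, the comparison
\begin{equation}
\|R_Ax_n-b_j\|^2-\|R_Ax_n-b_{j'}\|^2 = 2\scal{x_n}{u}(\scal{b_j}{u}-\scal{b_{j'}}{u}) + O(1)
\end{equation}
holds with the $O(1)$-term bounded uniformly in $n$. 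This yields a \emph{threshold principle}: when $|\scal{x_n}{u}|$ is sufficiently large, $b_{k(n+1)}$ must pick an index with extremal $\scal{b_j}{u}$---smallest if $\scal{x_n}{u}\gg 0$, largest if $\scal{x_n}{u}\ll 0$. Everything flows from this.

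For case~(i), assume (replacing $u$ by $-u$ if necessary) that $\scal{b_j}{u}\geq 0$ for every $j$; Lemma~\ref{l:lines} then makes $(\scal{x_n}{u})$ non-decreasing. When $A\cap B\neq\varnothing$, $\scal{b_1}{u}=0$. Were $\scal{x_n}{u}\to+\infty$, the threshold principle would force $\scal{b_{k(n+1)}}{u}=0$ eventually, contradicting divergence; hence $\scal{x_n}{u}\uparrow\alpha\in\RR$. The non-negative increments live in a finite set and sum to $\alpha-\scal{x_0}{u}<\infty$, so they vanish past some $N_1$, giving $b_{k(n)}\in A\cap B$ and $x_n=\alpha u+b_{k(n)}$ for $n\geq N_1$. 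Since $b,b'\in A\cap B$ implies $\scal{b-b'}{u}=0$,
\begin{equation}
\|b'-(b-\alpha u)\|^2=\|b'-b\|^2+\alpha^2\geq\alpha^2=\|b-(b-\alpha u)\|^2,
\end{equation}
with equality iff $b'=b$; as $b_{k(n+1)}\in P_BR_Ax_n\cap (A\cap B)$ and $R_Ax_n=b_{k(n)}-\alpha u$, we conclude $b_{k(n+1)}=b_{k(n)}$. Hence $(x_n)$ stabilizes at $x=\alpha u+b^*$ with $b^*\in A\cap B$; the same inequality gives $b^*\in P_B(b^*-\alpha u)$, so $Tx=x$ and $P_Ax=b^*\in A\cap B$. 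When instead $A\cap B=\varnothing$, $\scal{b_1}{u}>0$ makes $\scal{x_n}{u}\to+\infty$ and $\|x_n\|\geq|\scal{x_n}{u}|\to+\infty$. The threshold principle pushes $k(n)$ into $B_{\min}:=\{j:\scal{b_j}{u}=\scal{b_1}{u}\}$; within $B_{\min}$ the selection reduces (after subtracting the common term $4\scal{x_n}{u}\scal{b_1}{u}$) to minimizing $\|P_Ax_n-P_Ab_j\|=\|P_Ab_{k(n)}-P_Ab_j\|$, uniquely attained at $j=k(n)$. So $k(n)$ stabilizes to some $j^*\in B_{\min}$ and $P_Ax_n=P_Ab_{j^*}$ eventually; this is a best approximation solution since $d_B(P_Ab_{j^*})\leq|\scal{b_{j^*}}{u}|=\scal{b_1}{u}=\min d_B(A)$.

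For case~(ii), $\scal{b_1}{u}<0<\scal{b_m}{u}$. The threshold principle provides $M>0$ with $\scal{x_n}{u}>M\Rightarrow k(n+1)=1$, giving $\scal{x_{n+1}}{u}<\scal{x_n}{u}$; and $\scal{x_n}{u}<-M\Rightarrow k(n+1)=m$, giving $\scal{x_{n+1}}{u}>\scal{x_n}{u}$. A one-step induction then confines $\scal{x_n}{u}$ to a bounded interval, and the identity $\|x_n\|^2=\scal{x_n}{u}^2+\|P_Ab_{k(n)}\|^2$ (for $n\geq 1$) bounds $(x_n)$. If additionally $A\cap B=\varnothing$, Lemma~\ref{l:notcvg} supplies non-convergence, and
\begin{equation}
\|x_{n+1}-x_n\|=\|b_{k(n+1)}-P_Ax_n\|\geq d_A(b_{k(n+1)})\geq\min_{b\in B}d_A(b)=\min d_A(B)>0,
\end{equation}
the strict positivity coming from $A\cap B=\varnothing$.

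The principal technical difficulty is the threshold principle itself: one must verify that the bounded perturbation $-2\scal{P_Ax_n}{P_A(b_j-b_{j'})}+\|b_j\|^2-\|b_{j'}\|^2$ is dominated uniformly in $n$ by the drifting term $2\scal{x_n}{u}(\scal{b_j}{u}-\scal{b_{j'}}{u})$, which rests on the finiteness of $B$ together with $P_Ax_n=P_Ab_{k(n)}$ ranging over a fixed bounded set. Beyond that, the remaining arguments are algebraic consequences of the principle, save for the clean observation used in case~(i) that within $A\cap B\subseteq A=\{u\}^\perp$ the nearest point to $b-\alpha u$ is always $b$ itself, which supplies the automatic stabilization of $b_{k(n)}$ once the sequence has entered $A\cap B$.
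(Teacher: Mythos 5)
Your argument is correct, and it is worth comparing the two routes. For part (ii) you are doing essentially what the paper does: the comparison $\|R_Ax_n-b_j\|^2-\|R_Ax_n-b_{j'}\|^2=2\scal{x_n}{u}\bigl(\scal{b_j}{u}-\scal{b_{j'}}{u}\bigr)+O(1)$, with the $O(1)$ term controlled because $P_Ax_n=P_Ab_{k(n)}$ ranges over the finite set $P_AB$, is the same mechanism as the paper's thresholds $\beta_{i,j,n}$ (which are bounded for exactly the reason you give); your uniform constant $M$ and one-step induction is a cleaner packaging, and your displacement bound $\|x_{n+1}-x_n\|=\|b_{k(n+1)}-P_Ax_n\|\geq d_A(b_{k(n+1)})$ replaces the paper's Cauchy--Schwarz step with an equivalent observation. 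The genuine divergence is part (i): the paper disposes of it in one line by citing \cite[Theorem~7.5]{BD17}, whereas you prove it from scratch using the hyperplane structure --- monotonicity of $\scal{x_n}{u}$ once $B$ lies in one halfspace, finiteness of the increment set to force eventual stagnation when $A\cap B\neq\varnothing$, the within-group uniqueness of the nearest point (via distinctness of the $b_j$ and equal $u$-components) to stabilize $k(n)$, and the swap $\min d_B(A)=\min d_A(B)$ for the best-approximation claim. What your route buys is self-containedness and a concrete picture of the finite convergence; what the citation buys is brevity and coverage of the more general convex-plus-finite setting of \cite{BD17}. One expository quibble, not a gap: in case (i)(a) the claim ``the same inequality gives $b^*\in P_B(b^*-\alpha u)$'' is slightly misattributed, since that inequality only compares points of $A\cap B$; but the conclusion is still available, either because you have already shown $b_{k(n+1)}=b^*$ with $b_{k(n+1)}\in P_BR_Ax_n=P_B(b^*-\alpha u)$, or more simply because eventual constancy together with $x_{n+1}\in Tx_n$ gives $x\in Tx$ directly.
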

\begin{proof}
\ref{p:mpoints_finite}: This follows from \cite[Theorem~7.5]{BD17}.

\ref{p:mpoints_bounded}: Since $B$ is not a subset of one of two closed halfspaces induced by $A$, it follows from \eqref{e:B} that 
\begin{equation}
\label{e:b1bm}
\scal{b_1}{u}< 0< \scal{b_m}{u}.
\end{equation}
Combining Fact~\ref{f:A}\ref{f:A_R} with Lemma~\ref{l:lines}  yields
\begin{subequations}
\begin{align}
\label{e:RAx+}
(\forall n\in \NN^*)\quad R_Ax_n&= x_n- 2\scal{x_n}{u}u \\
&= \big( \scal{x_{n-1}}{u}u+ b_{k(n)} \big)- \big( \scal{x_{n-1}}{u}+ \scal{b_{k(n)}}{u} \big)u- \scal{x_n}{u}u \\ 
&= -(\scal{x_n}{u}+\scal{b_{k(n)}}{u})u+ b_{k(n)}.
\end{align}
\end{subequations}
For any $n\in \NN^*$ and any distinct indices $i, j\in \{1, \dots, m\}$, we have the following equivalences:
\begin{subequations}
\label{e:compare}
\begin{align}
&\|b_i- R_Ax_n\|\leq \|b_j- R_Ax_n\| \\ 
\Leftrightarrow{} &\|(\scal{x_n}{u}+\scal{b_{k(n)}}{u})u+ (b_i-b_{k(n)})\|^2 
\leq \|(\scal{x_n}{u}+\scal{b_{k(n)}}{u})u+ (b_j-b_{k(n)})\|^2 \\ 
\Leftrightarrow{} &\|b_i-b_{k(n)}\|^2- \|b_j-b_{k(n)}\|^2  
\leq 2(\scal{x_n}{u}+\scal{b_{k(n)}}{u})\scal{b_j-b_i}{u} \\
\Leftrightarrow{} &\begin{cases}
\scal{x_n}{u} \geq \beta_{i,j,n} &\text{if~} \scal{b_i}{u}< \scal{b_j}{u}, \\
\|b_i-b_{k(n)}\|\leq \|b_j-b_{k(n)}\| &\text{if~} \scal{b_i}{u}= \scal{b_j}{u}, \\
\scal{x_n}{u} \leq \beta_{i,j,n} &\text{if~} \scal{b_i}{u}> \scal{b_j}{u},
\end{cases}
\end{align}
\end{subequations} 
where 
\begin{equation}
\beta_{i,j,n}:= \frac{\|b_i-b_{k(n)}\|^2- \|b_j-b_{k(n)}\|^2}{2\scal{b_j-b_i}{u}} -\scal{b_{k(n)}}{u}.
\end{equation}
We shall now show that $(\scal{x_n}{u})_\nnn$ is bounded above. Setting 
\begin{equation}
r:= \max\menge{k\in \{1, \dots, m\}}{\scal{b_k}{u}= \scal{b_1}{u}},
\end{equation}
we see that $r< m$ due to \eqref{e:b1bm} and that, by \eqref{e:B}, 
\begin{equation}
\label{e:b1br}
\scal{b_1}{u}= \cdots= \scal{b_r}{u}< \scal{b_{r+1}}{u}\leq \cdots\leq \scal{b_m}{u}.
\end{equation}
Now let $n\in \NN^*$ and set 
\begin{equation}
I(n):= \menge{i\in \{1, \dots, r\}}{(\forall j\in \{1, \dots, r\})\quad \|b_i-b_{k(n)}\|\leq \|b_j-b_{k(n)}\|}.
\end{equation}
Then $I(n)= \{k(n)\}$ whenever $k(n)\in \{1, \dots, r\}$ and, by \eqref{e:compare}, 
\begin{equation}
\label{e:pre-compare}
(\forall i\in I(n))(\forall j\in \{1, \dots, r\})\quad \|b_i- R_Ax_n\|\leq \|b_j- R_Ax_n\|. 
\end{equation} 
Define
\begin{equation}
\beta_n:= \max\menge{\beta_{i,j,n}}{i\in I(n), j\in \{r+1, \dots, m\}}.
\end{equation}
If $\scal{x_n}{u}> \beta_n$, then \eqref{e:B} and \eqref{e:compare} yield
\begin{equation}
(\forall i\in I(n))(\forall k\in \{r+1, \dots, m\})\quad \|b_i- R_Ax_n\|< \|b_k- R_Ax_n\|, 
\end{equation}
which together with \eqref{e:pre-compare} implies that $k(n+1)\in I(n)\subseteq \{1, \dots, r\}$ and, 
by \eqref{e:x+,u}, \eqref{e:b1bm} and \eqref{e:b1br}, 
\begin{equation}
\label{e:decrease}
\scal{x_{n+1}}{u}= \scal{x_n}{u}+ \delta \quad\text{with}\quad \delta:= \scal{b_{k(n+1)}}{u}= \scal{b_1}{u}< 0.
\end{equation}
Noting that \eqref{e:decrease} holds whenever $\scal{x_n}{u}> \beta_n$ and that the sequence $(\beta_n)_\nnn$ is bounded since the set $\menge{\beta_{i,j,n}}{i\in I(n), j\in \{r+1, \dots, m\}, n\in \NN^*}$ is finite, we deduce that $(\scal{x_n}{u})_\nnn$ is bounded above. By a similar argument, $(\scal{x_n}{u})_\nnn$ is also bounded below. Combining with \eqref{e:x+}, we get boundedness of $(x_n)_\nnn$.

Finally, if $A\cap B= \varnothing$, then, by
Lemma~\ref{l:notcvg}, $(x_n)_\nnn$ is not convergent and, by the 
Cauchy--Schwarz inequality, Lemma~\ref{l:lines}, and Fact~\ref{f:A}\ref{f:A_d}, 
\begin{equation}
(\forall\nnn)\quad \|x_{n+1}- x_n\|\geq |\scal{x_{n+1}-x_n}{u}|=
|\scal{b_{k(n+1)}}{u}|= d_A(b_{k(n+1)})\geq \min d_A(B)> 0.
\end{equation}
The proof is complete.
\end{proof}

\section{Hyperplane and doubleton: characterization of cycling}
\label{s:cycling}

From now on, 
we assume that $B$ is a doubleton where 
the two points do not belong to the same closed halfspace induced by $A$; more precisely,
\begin{empheq}[box=\mybluebox]{equation}
B= \{b_1, b_2\}\subseteq X \quad\text{with}\quad \scal{b_1}{u}< 0< \scal{b_2}{u}.
\end{empheq}
Set 
\begin{empheq}[box=\mybluebox]{equation}
\label{e:32}
\beta_1:= \scal{b_1}{u}< 0,\quad \beta_2:= \scal{b_2}{u}>0, \quad\text{and}\quad \beta:= \frac{\|b_1-b_2\|^2}{2(\beta_1-\beta_2)}= -\frac{\|b_1-b_2\|^2}{2\scal{b_2-b_1}{u}}< 0.
\end{empheq}

\begin{proposition}
\label{p:2points}
The following holds for the DR sequence $(x_n)_\nnn$. 
\begin{enumerate}
\item 
\label{p:2points_bounded}
$(x_n)_\nnn$ is bounded but not convergent with 
\begin{equation}
(\forall\nnn)\quad \|x_n- x_{n+1}\| \geq \min\{d_A(b_1), d_A(b_2)\}> 0.
\end{equation}
\item 
\label{p:2points_x+}
For every $n\in \NN^*$, 
\begin{equation}
\label{e:x,u}
x_n= \scal{x_{n-1}}{u}u+ b_{k(n)} 
\quad\text{and}\quad \scal{x_n}{u}= \scal{x_{n-1}}{u}+ \scal{b_{k(n)}}{u}, 
\end{equation}
where $k(n)\in \{1, 2\}$ and where   
\begin{subequations}
\label{e:kn+}
\begin{align}
k(n)= 1\ \&\ \scal{x_n}{u}> \beta- \scal{b_1}{u} &\implies k(n+1)= 1, \\
k(n)= 1\ \&\ \scal{x_n}{u}< \beta- \scal{b_1}{u} &\implies k(n+1)= 2, \\
k(n)= 2\ \&\ \scal{x_n}{u}> -\beta- \scal{b_2}{u} &\implies k(n+1)= 1, \\
k(n)= 2\ \&\ \scal{x_n}{u}< -\beta- \scal{b_2}{u} &\implies k(n+1)= 2.
\end{align}
\end{subequations}
\item 
\label{p:2points_coeffs}
There exist increasing (a.k.a.\ ``nondecreasing'') 
sequences $(l_{1,n})_\nnn$ and $(l_{2,n})_\nnn$ in $\NN$ such that 
\begin{equation}
(\forall\nnn)\quad \scal{x_n}{u} =\scal{x_0}{u} +l_{1,n}\scal{b_1}{u} +l_{2,n}\scal{b_2}{u}
\quad\text{and}\quad l_{1,n} +l_{2,n} =n.
\end{equation}
Moreover, 
\begin{equation}
\frac{l_{1,n}}{n}\to \frac{\scal{b_2}{u}}{\scal{b_2-b_1}{u}}\in \left]0, 1\right[
\quad\text{and}\quad 
\frac{l_{2,n}}{n}\to \frac{\scal{b_1}{u}}{\scal{b_1-b_2}{u}}\in \left]0, 1\right[
\quad\text{as~} n\to +\infty.
\end{equation}
\end{enumerate}
\end{proposition}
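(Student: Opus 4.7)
The plan is to dispatch the three parts in order, reusing as much of Section~3 as possible, since most of the work has already been done for the general finite set $B$.

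For \ref{p:2points_bounded}, I would simply specialize Proposition~\ref{p:mpoints}\ref{p:mpoints_bounded} to the doubleton: the hypotheses $\scal{b_1}{u}<0<\scal{b_2}{u}$ force $B$ to straddle $A$, and they also imply $A\cap B=\varnothing$, so the boundedness, non-convergence, and lower bound $\|x_n-x_{n+1}\|\geq \min d_A(B)=\min\{d_A(b_1),d_A(b_2)\}$ all follow directly.

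For \ref{p:2points_x+}, the identity \eqref{e:x,u} is exactly Lemma~\ref{l:lines} restricted to $m=2$. The four implications in \eqref{e:kn+} come from the equivalences \eqref{e:compare} with $i=1$, $j=2$ (hence $\scal{b_i}{u}<\scal{b_j}{u}$, so only the first branch is active): after computing $\beta_{1,2,n}$ explicitly in terms of the defining constant $\beta$ from \eqref{e:32}, one finds
\begin{equation*}
\beta_{1,2,n}=\begin{cases}\beta-\beta_1 & \text{if } k(n)=1,\\ -\beta-\beta_2 & \text{if } k(n)=2,\end{cases}
\end{equation*}
because $\|b_1-b_{k(n)}\|^2-\|b_2-b_{k(n)}\|^2$ evaluates to $\mp\|b_1-b_2\|^2$ in the two cases. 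Since $B$ has only two elements, $\|b_1-R_Ax_n\|<\|b_2-R_Ax_n\|$ forces $P_BR_Ax_n=\{b_1\}$ and hence $k(n+1)=1$, and symmetrically; this yields the four implications.

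For \ref{p:2points_coeffs}, I would set $l_{i,n}:=\#\{\,j\in\{1,\dots,n\}\mid k(j)=i\,\}$ for $i\in\{1,2\}$, so that both sequences are nondecreasing with $l_{1,n}+l_{2,n}=n$, and iterating the recurrence $\scal{x_n}{u}=\scal{x_{n-1}}{u}+\scal{b_{k(n)}}{u}$ from \eqref{e:x,u} gives the claimed expression $\scal{x_n}{u}=\scal{x_0}{u}+l_{1,n}\beta_1+l_{2,n}\beta_2$. For the asymptotic ratios, substitute $l_{2,n}=n-l_{1,n}$ to get
\begin{equation*}
\frac{l_{1,n}}{n}=\frac{\beta_2}{\beta_2-\beta_1}+\frac{\scal{x_n-x_0}{u}}{n(\beta_1-\beta_2)},
\end{equation*}
and use the boundedness of $(x_n)_\nnn$ from \ref{p:2points_bounded} together with $\beta_1\neq \beta_2$ to conclude that the second term vanishes; the limit for $l_{2,n}/n$ is then immediate from $l_{1,n}+l_{2,n}=n$, and $\beta_1<0<\beta_2$ places both limits in $\left]0,1\right[$.

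The only genuinely substantive step is the second one, and even there the work is limited to a short calculation of $\beta_{1,2,n}$; everything else is bookkeeping on top of Proposition~\ref{p:mpoints}, Lemma~\ref{l:lines}, and the equivalences in \eqref{e:compare}, so I expect no real obstacle.
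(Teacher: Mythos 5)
Your proposal is correct and follows essentially the same route as the paper: part (i) by specializing Proposition~\ref{p:mpoints}\ref{p:mpoints_bounded}, part (ii) from Lemma~\ref{l:lines} and the equivalences \eqref{e:compare} with $i=1$, $j=2$ (your explicit evaluation $\beta_{1,2,n}=\beta-\beta_1$ or $-\beta-\beta_2$ is exactly what the paper leaves implicit), and part (iii) by counting the occurrences of $k(j)=1,2$ and using boundedness from (i) to extract the limiting ratios. No gaps.
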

\begin{proof}
\ref{p:2points_bounded}: By assumption, $b_1, b_2 \notin A$, and hence $A\cap B =\varnothing$. The conclusion follows from Proposition~\ref{p:mpoints}\ref{p:mpoints_bounded}.

\ref{p:2points_x+}: We get \eqref{e:x,u} from Lemma~\ref{l:lines}. 
The equivalences \eqref{e:compare} in the 
proof of Proposition~\ref{p:mpoints}\ref{p:mpoints_bounded}
state 
\begin{equation}
\|b_1- R_Ax_n\|\leq \|b_2- R_Ax_n\| \Leftrightarrow{} 
\scal{x_n}{u}\geq \frac{\|b_1-b_{k(n)}\|^2- \|b_2-b_{k(n)}\|^2}{2\scal{b_2-b_1}{u}} -\scal{b_{k(n)}}{u},
\end{equation}
which implies \eqref{e:kn+}.

\ref{p:2points_coeffs}: Using \eqref{e:x,u}, we find increasing 
sequences $(l_{1,n})_\nnn$ and $(l_{2,n})_\nnn$ in $\NN$ such that
\begin{equation}
\label{e:xnx0}
(\forall\nnn)\quad \scal{x_n}{u} =\scal{x_0}{u} +l_{1,n}\scal{b_1}{u} +l_{2,n}\scal{b_2}{u}
\end{equation} 
and that
\begin{equation}
(\forall\nnn)\quad l_{1,n} +l_{2,n} =n.
\end{equation}
Combining with \ref{p:2points_bounded}, we obtain that 
\begin{equation}
l_{1,n}\scal{b_1}{u} +(n -l_{1,n})\scal{b_2}{u} =l_{1,n}\scal{b_1}{u} +l_{2,n}\scal{b_2}{u} 
=\scal{x_n}{u} -\scal{x_0}{u}
\end{equation}
is bounded. It follows that
\begin{equation}
\frac{l_{1,n}}{n}\scal{b_1-b_2}{u}+ \scal{b_2}{u}\to 0 \quad\text{as~} n\to +\infty,
\end{equation}
which yields
\begin{equation}
\frac{l_{1,n}}{n}\to \frac{\scal{b_2}{u}}{\scal{b_2-b_1}{u}}\in \left]0, 1\right[ \quad\text{and}\quad 
\frac{l_{2,n}}{n}= 1- \frac{l_{1,n}}{n}\to \frac{-\scal{b_1}{u}}{\scal{b_2 -b_1}{u}}\in \left]0, 1\right[
\end{equation}
as $n\to +\infty$.
\end{proof}

\begin{theorem}[cycling and rationality] 
\label{t:2points}
The DR sequence $(x_n)_\nnn$ cycles after a certain number of
steps regardless of the starting point 
if and only if $d_A(b_1)/d_A(b_2)\in \mathbb{Q}$.
\end{theorem}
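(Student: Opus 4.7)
The plan is to translate the claim into a statement about the scalar sequence $s_n := \scal{x_n}{u}$ and leverage Proposition~\ref{p:2points}. Note that by Fact~\ref{f:A}\ref{f:A_d} we have $\beta_1 = -d_A(b_1) < 0 < d_A(b_2) = \beta_2$, so the condition $d_A(b_1)/d_A(b_2)\in\mathbb{Q}$ is equivalent to $-\beta_1/\beta_2 \in \mathbb{Q}$.

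For the \emph{necessity} direction, I would assume the sequence is eventually periodic with period $p \geq 1$, so $x_{n+p} = x_n$, and in particular $s_{n+p} = s_n$, for all sufficiently large $n$. Proposition~\ref{p:2points}\ref{p:2points_coeffs} then gives
\[
0 = s_{n+p} - s_n = (l_{1,n+p} - l_{1,n})\beta_1 + (l_{2,n+p} - l_{2,n})\beta_2.
\]
Writing $c_i := l_{i,n+p} - l_{i,n}\in\NN$, we have $c_1+c_2 = p > 0$. Since $\beta_1 < 0 < \beta_2$, no nontrivial nonnegative combination can vanish unless both $c_1$ and $c_2$ are strictly positive, and then $-\beta_1/\beta_2 = c_2/c_1 \in \mathbb{Q}$.

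For the \emph{sufficiency} direction, assume $-\beta_1/\beta_2 = p/q$ in lowest terms. Then $\beta_1\mathbb{Z}+\beta_2\mathbb{Z} = \alpha\mathbb{Z}$ with $\alpha := \beta_2/q > 0$, so by Proposition~\ref{p:2points}\ref{p:2points_coeffs} every $s_n$ lies in the discrete coset $s_0 + \alpha\mathbb{Z}$. On the other hand, $(x_n)_\nnn$ is bounded by Proposition~\ref{p:2points}\ref{p:2points_bounded}, hence so is $(s_n)_\nnn$; a bounded subset of a discrete set is finite, so $(s_n)_\nnn$ assumes only finitely many values. Since $x_n = s_{n-1}u + b_{k(n)}$ with $k(n)\in\{1,2\}$ by Proposition~\ref{p:2points}\ref{p:2points_x+}, the sequence $(x_n)_\nnn$ itself ranges over a finite set. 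By pigeonhole there exist $m < n$ with $x_m = x_n$, and I would then argue that this repetition propagates to $x_{m+j} = x_{n+j}$ for all $j\geq 0$, yielding cycling with period $n - m$ after at most $m$ steps.

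The hard part is the last inference: turning a single repetition into full periodicity. The operator $T = \tfrac12(\Id + R_B R_A)$ can be multi-valued precisely when $R_A x$ is equidistant from $b_1$ and $b_2$, which by Fact~\ref{f:A}\ref{f:A_R} is a single explicit affine condition on $x$. My preferred route is to enlarge the state to the pair $(s_{n-1}, k(n))$, which still lives in a finite set, and to use the deterministic transition rule \eqref{e:kn+} from Proposition~\ref{p:2points}\ref{p:2points_x+}: away from the two boundary values $s_n \in \{\beta - \beta_1,\, -\beta - \beta_2\}$, the state $(s_n, k(n+1))$ is a function of $(s_{n-1}, k(n))$, so a repetition of state forces full cycling. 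The exceptional boundary cases would have to be handled separately, either by showing they cannot recur along a bounded orbit lying in a discrete fibre, or by a consistent tie-breaking; carrying out this case distinction cleanly, and controlling the length of the transient, is where the technical work concentrates.
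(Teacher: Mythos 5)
Your argument follows essentially the same route as the paper. The necessity direction is identical: summing the increments $\scal{b_{k(n)}}{u}$ over one period gives $c_1\beta_1+c_2\beta_2=0$ with $c_1+c_2=p>0$, which forces $c_1,c_2>0$ and hence rationality. For sufficiency, the paper likewise reduces to showing that $(x_n)_\nnn$ takes only finitely many values and then applies pigeonhole; it does so by reducing the coefficients $(l_{1,n},l_{2,n})$ modulo $(q_1,q_2)$ and invoking boundedness, whereas you observe that $\beta_1\mathbb{Z}+\beta_2\mathbb{Z}=\alpha\mathbb{Z}$ is discrete, so the bounded sequence $(\scal{x_n}{u})_\nnn$ lies in a finite subset of a coset. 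Both work; yours is marginally cleaner.

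The step you stopped short of --- that a single repetition $x_{n_0}=x_{n_0+m}$ forces cycling --- is precisely the step the paper asserts without comment (``It follows that the sequence cycles\dots''). To close it you do not need the augmented state $(s_{n-1},k(n))$: the set $Tx$ depends only on the current point $x$, so as soon as the iteration is run with a selection that is a \emph{function} of the current point (equivalently, a fixed tie-breaking rule, as adopted in Section~\ref{s:closed-form}), the equality $x_{n_0}=x_{n_0+m}$ propagates by induction and the sequence is $m$-periodic from $n_0$ onwards. Your alternative route --- showing that tie points cannot recur along a bounded orbit --- is not viable: take $A=\{0\}\subseteq\RR$, $B=\{-1,2\}$ and $x_0=-\tfrac12$; then $R_Ax_0=\tfrac12$ is equidistant from $-1$ and $2$, so $Tx_0=\{-\tfrac32,\tfrac32\}$, and both branches map next to $\tfrac12$ and then back to the tie point $-\tfrac12$, which therefore recurs every three steps. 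A history-dependent choice at these recurrent visits yields a bounded, finitely-valued but non-periodic DR sequence, so the theorem implicitly presupposes a consistent (point-dependent) selection. With that understanding your proof is complete; the gap you flagged is real but is one the paper itself leaves implicit rather than a defect specific to your argument.
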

\begin{proof}
First, by Fact~\ref{f:A}\ref{f:A_d}, $d_A= |\scal{\cdot}{u}|$, which yields
\begin{equation}
\label{e:dAb1b2}
d_A(b_1)= -\scal{b_1}{u} \quad\text{and}\quad d_A(b_2)= \scal{b_2}{u}.
\end{equation} 
We also note from Proposition~\ref{p:2points}\ref{p:2points_bounded}--\ref{p:2points_x+} that 
\begin{equation}
\label{e:bounded}
\text{$(|\scal{x_n}{u}|)_\nnn$ is bounded}, 
\end{equation}
that 
\begin{equation}
\label{e:xn}
(\forall n\in \NN^*)\quad x_n= \scal{x_{n-1}}{u}u+ b_{k(n)},
\end{equation}
and that 
\begin{equation}
\label{e:xnu}
(\forall n\in \NN^*)\quad \scal{x_n}{u}= \scal{x_{n-1}}{u}+ \scal{b_{k(n)}}{u}, 
\end{equation}
where $k(n)\in \{1, 2\}$. 

``$\Leftarrow$'': Assume that $d_A(b_1)/d_A(b_2)\in \mathbb{Q}$. Then there exist $q_1, q_2\in \NN^*$ such that $q_1d_A(b_1) =q_2d_A(b_2)$, or equivalently (using \eqref{e:dAb1b2}),
\begin{equation}
\label{e:suff}
q_1\scal{b_1}{u} +q_2\scal{b_2}{u} =0.
\end{equation}
It follows from Proposition~\ref{p:2points}\ref{p:2points_coeffs} that  
\begin{equation}
\label{e:xnx0'}
(\forall\nnn)\quad \scal{x_n}{u} =\scal{x_0}{u} +l_{1,n}\scal{b_1}{u} +l_{2,n}\scal{b_2}{u}
\end{equation} 
with $(l_{1,n}, l_{2,n})\in \NN^2$.
By \eqref{e:suff}, whenever $l_{1,n} \geq q_1$ and $l_{2,n} \geq q_2$, we have
\begin{equation}
\scal{x_n}{u} = \scal{x_0}{u} +(l_{1,n} -q_1)\scal{b_1}{u} +(l_{2,n} -q_2)\scal{b_2}{u}.
\end{equation}
We can thus restrict to considering the sequences $l_{1,n}',l_{2,n}'$ satisfying \eqref{e:xnx0'} 
and also the additional stipulation that $l_{1,n}' <q_1$ or $l_{2,n}' <q_2$. 
Then $l_{1,n}'\scal{b_1}{u}$ or $l_{2,n}'\scal{b_2}{u}$ is bounded. 
This together with \eqref{e:bounded} and \eqref{e:xnx0'} implies that 
both $l_{1,n}'\scal{b_1}{u}$ and $l_{2,n}'\scal{b_2}{u}$ are bounded, and so are $l_{1,n}'$ and $l_{2,n}'$. 
Hence, there exist $L_1, L_2 \in \NN$ such that 
\begin{equation}
(\forall\nnn)\quad 0 \leq l_{1,n}' \leq L_1 \quad\text{and}\quad 0 \leq l_{2,n}' \leq L_2.
\end{equation}
By combining with \eqref{e:xn} and \eqref{e:xnx0'}, $(\forall n\in \NN^*)$ $x_n\in S$, where
\begin{equation}
S :=\menge{\scal{x_0}{u}u +l_1'\scal{b_1}{u}u +l_2'\scal{b_2}{u}u +b_k}
{l_1' =0, \dots, L_1,\ l_2' =0, \dots, L_2,\ k =1, 2}.
\end{equation} 
Since $S$ is a finite set, there exist $n_0\in \NN$ and $m\in \NN^*$ such that $x_{n_0} =x_{n_0+m}$.
It follows that the sequence $(x_n)_\nnn$ cycles between $m$ points $x_{n_0}, \dots, x_{n_0+m-1}$ from $n_0$ onwards. 

``$\Rightarrow$'': Assume that $(x_n)_\nnn$ cycles between $m$ points from $n_0 \in \NN$ onwards, i.e., $(\forall n \geq n_0)$ $x_{n+m} =x_n$. 
By \eqref{e:xnu}, 
\begin{equation}
\scal{x_{n_0}}{u} +\sum_{n=n_0}^{n_0+m-1} \scal{b_{k(n)}}{u} =\scal{x_{n_0}}{u}.
\end{equation}
There thus exist $q_1, q_2 \in \NN$ such that $q_1 +q_2 =m >0$ and $q_1\scal{b_1}{u} +q_2\scal{b_2}{u} =0$.
Combining with \eqref{e:dAb1b2} implies that $q_1, q_2\neq 0$ and that $d_A(b_1)/d_A(b_2)= q_2/q_1\in \mathbb{Q}$. 
\end{proof}

\section{Hyperplane and doubleton: closed-form expressions}
\label{s:closed-form}

In this final section, we refine the previously considered case
with the aim of obtaining \emph{closed-form} expressions for the
terms of the DR sequence $(x_n)_\nnn$. 

Recall from Proposition~\ref{p:2points}\ref{p:2points_x+} that
\begin{equation}
\label{e:x,u'}
(\forall n\in \NN^*)\quad x_n= \scal{x_{n-1}}{u}u+ b_{k(n)} 
\quad\text{and}\quad \scal{x_n}{u}= \scal{x_{n-1}}{u}+ \scal{b_{k(n)}}{u}, 
\end{equation}
where $k(n)\in \{1, 2\}$ and where
\begin{subequations}
\begin{align}
k(n)= 1\ \&\ \scal{x_n}{u}> \beta-\beta_1 &\implies k(n+1)= 1, \label{e:kn11} \\
k(n)= 1\ \&\ \scal{x_n}{u}< \beta-\beta_1 &\implies k(n+1)= 2, \label{e:kn12} \\
k(n)= 2\ \&\ \scal{x_n}{u}> -\beta-\beta_2 &\implies k(n+1)= 1. \label{e:kn21}
\end{align}
\end{subequations}
We note here that if $k(n)= 1$ and $\scal{x_n}{u}=
\beta-\beta_1$, then both $1$ and $2$ are acceptable values for
$k(n+1)$; 
for the sake of simplicity, we choose $k(n+1)= 2$ in this case. 
Define
\begin{subequations}
\begin{empheq}[box=\mybluebox]{align}
S_1&:= \Menge{x_n}{n\in \NN^*,\ k(n)= 1,\ \scal{x_n}{u}\in \left]\beta, \beta+\beta_2\right]},\\
S_2&:= \Menge{x_n}{n\in \NN^*,\ k(n)= 2,\ \scal{x_n}{u}\in \left]\beta+\beta_2, \beta-\beta_1+\beta_2\right]}.
\end{empheq}
\end{subequations}

\begin{proposition}
\label{p:segments}
Let $n\in \NN^*$. Then the following hold:
\begin{enumerate}
\item 
\label{p:segments_11}
If $k(n)= 1$ and $\scal{x_n}{u}\in \left]\beta-\beta_1, \beta+\beta_2\right]$, then 
\begin{equation}
k(n+1)= 1 \quad\text{and}\quad \scal{x_{n+1}}{u}= \scal{x_n}{u}+ \beta_1\in \left]\beta, \beta+\beta_2\right].
\end{equation}
\item 
\label{p:segments_12}
If $k(n)= 1$ and $\scal{x_n}{u}\in \left]\beta, \beta-\beta_1\right]$, then 
\begin{equation}
k(n+1)= 2 \quad\text{and}\quad \scal{x_{n+1}}{u}= \scal{x_n}{u}+ \beta_2\in \left]\beta+\beta_2, \beta-\beta_1+\beta_2\right].
\end{equation}
\item
\label{p:segments_21}
If $k(n)= 2$, $\scal{x_n}{u}\in \left]\beta+\beta_2,
\beta-\beta_1+\beta_2\right]$ and 
$\beta+\beta_2\geq 0$, then 
\begin{equation}
k(n+1)= 1 \quad\text{and}\quad \scal{x_{n+1}}{u}= \scal{x_n}{u}+ \beta_1\in \left]\beta+\beta_1+\beta_2, \beta+\beta_2\right]\subseteq \left]\beta, \beta+\beta_2\right]. 
\end{equation}
\end{enumerate}
Consequently, 
\begin{equation}
\big(\beta+\beta_2\geq 0
\;\text{and}\;
x_n\in S_1\cup S_2\big) \implies x_{n+1}\in S_1\cup S_2.
\end{equation}
\end{proposition}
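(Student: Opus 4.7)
The plan is to verify each item by a direct substitution into the update rule \eqref{e:x,u'} combined with the appropriate trigger in \eqref{e:kn11}--\eqref{e:kn21}, and then to assemble (i)--(iii) for the final consequence. Almost everything is bookkeeping of endpoints, so the only non-routine ingredient is a preliminary inequality that ties the hypothesis $\beta+\beta_2\geq 0$ to the condition $\beta_1+\beta_2\geq 0$ needed to partition $S_1$ and to verify the interval inclusion in (iii).

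First I would record that preliminary inequality. By the Cauchy--Schwarz inequality applied to $\scal{b_1-b_2}{u}$ with $\|u\|=1$, one has $\|b_1-b_2\|^2\geq (\beta_1-\beta_2)^2$. Plugging this into the definition \eqref{e:32} of $\beta$ and rearranging yields $\beta\leq (\beta_1-\beta_2)/2$, equivalently
\begin{equation*}
\beta_1+\beta_2\geq 2(\beta+\beta_2).
\end{equation*}
Hence $\beta+\beta_2\geq 0$ forces $\beta_1+\beta_2\geq 0$, and in particular $\beta-\beta_1\leq \beta+\beta_2$.

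For item (i), the hypothesis supplies $\scal{x_n}{u}>\beta-\beta_1$, so \eqref{e:kn11} gives $k(n+1)=1$ and \eqref{e:x,u'} gives $\scal{x_{n+1}}{u}=\scal{x_n}{u}+\beta_1$; the lower bound $\scal{x_{n+1}}{u}>\beta$ follows from $\scal{x_n}{u}>\beta-\beta_1$, and the upper bound $\scal{x_{n+1}}{u}\leq \beta+\beta_2$ from $\scal{x_n}{u}\leq \beta+\beta_2$ together with $\beta_1<0$. Item (ii) is analogous: $\scal{x_n}{u}\leq \beta-\beta_1$ triggers \eqref{e:kn12} (using the authors' tie-breaking convention at equality), giving $k(n+1)=2$ and $\scal{x_{n+1}}{u}=\scal{x_n}{u}+\beta_2$, and the claimed endpoints come by adding $\beta_2$ to the two ends of $]\beta,\beta-\beta_1]$. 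For item (iii), the hypothesis $\beta+\beta_2\geq 0$ gives $\scal{x_n}{u}>\beta+\beta_2\geq 0\geq -\beta-\beta_2$, so \eqref{e:kn21} produces $k(n+1)=1$ and $\scal{x_{n+1}}{u}=\scal{x_n}{u}+\beta_1$; adding $\beta_1$ to the endpoints of $]\beta+\beta_2,\beta-\beta_1+\beta_2]$ gives $]\beta+\beta_1+\beta_2,\beta+\beta_2]$, and the inclusion into $]\beta,\beta+\beta_2]$ is exactly the preliminary inequality $\beta_1+\beta_2\geq 0$.

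The consequence is then a case split. If $x_n\in S_1$, so $k(n)=1$ and $\scal{x_n}{u}\in \left]\beta,\beta+\beta_2\right]$, the preliminary inequality legitimizes the partition $\left]\beta,\beta+\beta_2\right]=\left]\beta,\beta-\beta_1\right]\cup\left]\beta-\beta_1,\beta+\beta_2\right]$; items (ii) and (i) respectively place $x_{n+1}$ in $S_2$ and $S_1$. If $x_n\in S_2$, item (iii) directly places $x_{n+1}$ in $S_1$. The main obstacle, modest but easy to overlook, is recognizing that the Cauchy--Schwarz bound on $\|b_1-b_2\|$ is exactly what upgrades the assumption $\beta+\beta_2\geq 0$ to $\beta_1+\beta_2\geq 0$, which is what makes the two interval bookkeeping steps actually go through.
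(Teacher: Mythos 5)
Your proposal is correct and follows essentially the same route as the paper: the Cauchy--Schwarz estimate $\beta_2-\beta_1\leq\|b_1-b_2\|=-2\beta\cdot\frac{\beta_2-\beta_1}{\|b_1-b_2\|}$ (i.e.\ $\beta_2-\beta_1\leq -2\beta$, hence $\beta_1+\beta_2\geq 2(\beta+\beta_2)\geq 0$), followed by direct application of \eqref{e:kn11}--\eqref{e:kn21} with the update $\scal{x_{n+1}}{u}=\scal{x_n}{u}+\scal{b_{k(n+1)}}{u}$ for (i)--(iii), and the same three-way case split (including the tie-breaking convention in (ii)) for the consequence. No gaps.
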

\begin{proof}
Notice from \eqref{e:x,u'} that
\begin{equation}
\label{e:xu+}
(\forall\nnn)\quad \scal{x_{n+1}}{u}= \scal{x_n}{u}+ \scal{b_{k(n+1)}}{u}.
\end{equation}

\ref{p:segments_11}: Combine \eqref{e:kn11} and \eqref{e:xu+} 
while noting that $\beta+\beta_1+\beta_2< \beta+\beta_2$ by
\eqref{e:32}. 

\ref{p:segments_12}: Combine \eqref{e:kn12} and \eqref{e:xu+}.

\ref{p:segments_21}: 
By \eqref{e:32} and the Cauchy--Schwarz inequality, 
we obtain 
\begin{equation}
0< \beta_2- \beta_1= \scal{b_2-b_1}{u}\leq \|b_2-b_1\|\|u\|= \|b_2-b_1\|
\end{equation}
and 
\begin{equation}
\label{e:inequality}
\beta_2- \beta_1\leq \frac{\|b_2-b_1\|^2}{\beta_2-\beta_1}= -2\beta.
\end{equation}
Now assume that $\beta+ \beta_2\geq 0$. Then $\beta_1+ \beta_2\geq (2\beta+ \beta_2)+ \beta_2= 2(\beta+ \beta_2)\geq 0$, and hence 
$\left]\beta+\beta_1+\beta_2, \beta+\beta_2\right]\subseteq \left]\beta, \beta+\beta_2\right]$.
It follows from $\scal{x_n}{u}> \beta+\beta_2\geq 0$ that $\scal{x_n}{u}> -\beta-\beta_2$. Now use \eqref{e:kn21} and \eqref{e:xu+}. 

Finally, assume that $x_n\in S_1\cup S_2$. If $x_n\in S_2$, then we have from \ref{p:segments_21} that $x_{n+1}\in S_1$. If $x_n\in S_1$ and $\scal{x_n}{u}\in \left]\beta, \beta-\beta_1\right]$, then, by \ref{p:segments_12}, $x_{n+1}\in S_2$. If $x_n\in S_1$ and $\scal{x_n}{u}\in \left]\beta-\beta_1, \beta+\beta_2\right]$, then $x_{n+1}\in S_1$ due to \ref{p:segments_11}. Altogether, $x_{n+1}\in S_1\cup S_2$. 
\end{proof}

\begin{theorem}[closed-form expressions]
\label{t:closedform}
Suppose that $\beta+\beta_2\geq 0$ and that $x_1\in S_1\cup S_2$. 
Then 
\begin{subequations}
\label{e:xu}
\begin{align}
(\forall n\in \NN^*)\quad \scal{x_n}{u}
&= \scal{x_0}{u}+ n\beta_1+ \left\lfloor \frac{-\scal{x_0}{u}+\beta-(n+1)\beta_1+\beta_2}{\beta_2-\beta_1} \right\rfloor (\beta_2- \beta_1) \\
&= \scal{x_0}{u}- \left\lfloor \frac{-\scal{x_0}{u}+\beta-\beta_1-(n-1)\beta_2}{\beta_2-\beta_1} \right\rfloor \beta_1 \notag \\ 
&\hspace{3.5cm}+ \left\lfloor \frac{-\scal{x_0}{u}+\beta-(n+1)\beta_1+\beta_2}{\beta_2-\beta_1} \right\rfloor \beta_2
\end{align}
\end{subequations}
and 
\begin{equation}
\label{e:x}
(\forall n\in \NN^*)\quad x_n= \scal{x_{n-1}}{u}u+ b_{k(n)},  
\end{equation}
where
\begin{subequations}
\label{e:kn}
\begin{align}
(\forall n\in \NN^*)\quad 
k(n)&= \begin{cases}
1 &\text{if~} \scal{x_n}{u}\leq \beta+\beta_2, \\
2 &\text{if~} \scal{x_n}{u}> \beta+\beta_2
\end{cases} \\
&= \left\lfloor \frac{-\scal{x_0}{u}+\beta-(n+1)\beta_1+\beta_2}{\beta_2-\beta_1} \right\rfloor- \left\lfloor \frac{-\scal{x_0}{u}+\beta-n\beta_1+\beta_2}{\beta_2-\beta_1} \right\rfloor+ 1.
\end{align}
\end{subequations}
\end{theorem}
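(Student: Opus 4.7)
The plan is to reduce the two-dimensional DR dynamics to a scalar ``rotation on an interval'' of length $\beta_2-\beta_1$, from which every closed form in the statement will fall out of a single telescoping calculation followed by a floor-function inversion.

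First, I would use the assumptions $\beta+\beta_2\geq 0$ and $x_1\in S_1\cup S_2$ to invoke \cref{p:segments} inductively and conclude $x_n\in S_1\cup S_2$ for every $n\in\NN^*$. Writing $y_n:=\scal{x_n}{u}$ and $L:=\beta_2-\beta_1>0$, this invariance becomes $y_n\in(\beta,\beta+L]$ for every $n\geq 1$, and the three transition cases of \cref{p:segments} collapse into the single recursion $y_{n+1}-y_n\in\{\beta_1,\beta_2\}=\{\beta_1,\beta_1+L\}$. Qualitatively, $(y_n)$ performs a rotation by $\beta_1$ on the circle $(\beta,\beta+L]$.

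Second, I would telescope \eqref{e:x,u'} to obtain
\[
y_n \;=\; y_0 + n\beta_1 + J_n L,\qquad J_n:=\bigl|\{m\in\{1,\dots,n\}:k(m)=2\}\bigr|.
\]
The invariance pins $J_n$ down uniquely: the constraint $\beta<y_0+n\beta_1+J_n L\leq\beta+L$ is a half-open interval of length one on $J_n$, forcing
\[
J_n \;=\; \left\lfloor \frac{\beta+L-y_0-n\beta_1}{L} \right\rfloor,
\]
and a purely algebraic rearrangement (replacing $L$ by $\beta_2-\beta_1$ and regrouping $-n\beta_1$ as $-(n+1)\beta_1+\beta_1$) matches this with the floor expression in \eqref{e:xu}, establishing form~(a). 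For form~(b), I would rewrite $y_n=y_0+(n-J_n)\beta_1+J_n\beta_2$ and apply the identity $\lfloor z\rfloor-n=\lfloor z-n\rfloor$ to recast $n-J_n$ as the required floor, the numerator falling into shape after substituting $nL=n\beta_2-n\beta_1$.

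Third, the identity \eqref{e:x} is merely a restatement of \eqref{e:x,u'}. For \eqref{e:kn}, the two-case definition of $k(n)$ is precisely the $S_1$-versus-$S_2$ dichotomy already secured by the invariance, and the floor-function expression follows from the identity $k(n)=1+(J_n-J_{n-1})$ (the counter $J$ increments by $1$ exactly when $k(n)=2$) combined with the closed form for $J_n$.

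The step I expect to be the main (mild) obstacle is boundary bookkeeping: the floor formula must return the correct integer when its argument is itself an integer, which occurs exactly when $y_n$ lands on the right endpoint $\beta+L$ of the invariant interval. The convention adopted just before the definition of $S_1,S_2$---choosing $k(n+1)=2$ when $k(n)=1$ and $\scal{x_n}{u}=\beta-\beta_1$---is what ensures the right-closed representative is the one the dynamics selects, matching the floor-based closed form. Once that convention is tracked, the remainder of the argument is routine.
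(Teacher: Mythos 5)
Your proposal follows essentially the same route as the paper's proof of \eqref{e:xu} and \eqref{e:x}: you use Proposition~\ref{p:segments} inductively to get $x_n\in S_1\cup S_2$, hence $\scal{x_n}{u}\in\left]\beta,\beta-\beta_1+\beta_2\right]$ for all $n\in\NN^*$; you write $\scal{x_n}{u}=\scal{x_0}{u}+n\beta_1+J_n(\beta_2-\beta_1)$ with an integer counter $J_n$ (the content of Proposition~\ref{p:2points}\ref{p:2points_coeffs}, which you re-derive by telescoping); and you pin $J_n$ down as the unique integer in a half-open window of length one, which is exactly how the paper produces the floor in \eqref{e:xu}. Your rearrangement for the second form of \eqref{e:xu} and the identification of the case form of \eqref{e:kn} with the $S_1$-versus-$S_2$ dichotomy are also the paper's steps.

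The one place you diverge is the floor expression in \eqref{e:kn}: you get it from $k(n)=1+(J_n-J_{n-1})$ combined with ``the closed form for $J_n$''. For $n\geq 2$ this is correct and arguably slicker than the paper's computation, but at $n=1$ it silently applies the closed form at index $0$, i.e.\ it needs $J_0=0=\big\lfloor\big(-\scal{x_0}{u}+\beta-\beta_1+\beta_2\big)/(\beta_2-\beta_1)\big\rfloor$, which your invariance argument does not cover since it starts only at $n=1$ and the theorem assumes nothing directly about $x_0$. The needed fact is equivalent to $\scal{x_0}{u}\in\left]\beta,\beta-\beta_1+\beta_2\right]$, and it does follow from $x_1\in S_1\cup S_2$ (split on $k(1)$ and subtract $\beta_{k(1)}$ from the interval containing $\scal{x_1}{u}$), so the gap is a two-line fix rather than a fatal flaw; note, however, that the boundary issue is not the one you flagged (the tie-breaking convention at $\scal{x_n}{u}=\beta-\beta_1$), but the missing index-$0$ case. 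The paper avoids this entirely by verifying \eqref{e:kn} through a direct two-case comparison of the two floors at index $n$ alone, using $\scal{x_n}{u}\lessgtr\beta+\beta_2$ together with $-(\beta_2-\beta_1)<\beta_1<0$, and never invoking information about index $n-1$.
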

\begin{proof}
Note that \eqref{e:x} follows from \eqref{e:x,u}. 
According to Proposition~\ref{p:2points}\ref{p:2points_coeffs}, 
\begin{equation}
\label{e:xnx0''}
(\forall\nnn)\quad \scal{x_n}{u}= \scal{x_0}{u}+ (n-l_n)\beta_1+ l_n\beta_2 \quad\text{with}\quad l_n\in \NN.
\end{equation}
Since $x_1\in S_1\cup S_2$, Proposition~\ref{p:segments} yields 
\begin{equation}
\label{e:xnS1S2}
(\forall n\in \NN^*)\quad x_n\in S_1\cup S_2.
\end{equation}
Let $n\in \NN^*$. 
It follows from \eqref{e:32} and 
\eqref{e:xnS1S2} that $\scal{x_n}{u}\in \left]\beta, \beta-\beta_1+\beta_2\right]$, which, combined with \eqref{e:xnx0''}, gives
\begin{equation}
\frac{-\scal{x_0}{u}+\beta-(n+1)\beta_1+\beta_2}{\beta_2-\beta_1}- 1<l_n\leq \frac{-\scal{x_0}{u}+\beta-(n+1)\beta_1+\beta_2}{\beta_2-\beta_1}.
\end{equation}
Therefore,
\begin{equation}
l_n= \left\lfloor \frac{-\scal{x_0}{u}+\beta-(n+1)\beta_1+\beta_2}{\beta_2-\beta_1} \right\rfloor
\end{equation}
and 
\begin{equation}
n- l_n= -\left\lfloor \frac{-\scal{x_0}{u}+\beta-\beta_1-(n-1)\beta_2}{\beta_2-\beta_1} \right\rfloor,
\end{equation}
which imply \eqref{e:xu}. 

To get \eqref{e:x} and \eqref{e:kn}, we distinguish two cases.

\emph{Case 1}: $\scal{x_n}{u}\leq \beta+ \beta_2$. 
On the one hand, by \eqref{e:xnS1S2} 
we must have $x_n\in S_1$ and $k(n)= 1$. 
On the other hand, from $\scal{x_n}{u}\leq \beta+ \beta_2$ and \eqref{e:xu}, noting that $\beta_1< 0$, we obtain that
\begin{subequations}
\begin{align}
\left\lfloor \frac{-\scal{x_0}{u}+\beta-(n+1)\beta_1+\beta_2}{\beta_2-\beta_1} \right\rfloor 
&\leq \frac{-\scal{x_0}{u}+\beta-n\beta_1+\beta_2}{\beta_2-\beta_1} \\
&< \frac{-\scal{x_0}{u}+\beta-(n+1)\beta_1+\beta_2}{\beta_2-\beta_1} 
\end{align}
\end{subequations}
which yields
\begin{equation}
\left\lfloor \frac{-\scal{x_0}{u}+\beta-n\beta_1+\beta_2}{\beta_2-\beta_1} \right\rfloor
= \left\lfloor \frac{-\scal{x_0}{u}+\beta-(n+1)\beta_1+\beta_2}{\beta_2-\beta_1} \right\rfloor,
\end{equation}
hence \eqref{e:x} and \eqref{e:kn} hold. 

\emph{Case 2}: $\scal{x_n}{u}> \beta+ \beta_2$. 
By \eqref{e:xnS1S2}, $x_n\in S_2$ and $k(n)= 2$.
Again using \eqref{e:xu} and noting that $\beta_1< 0< \beta_2$, we derive that
\begin{subequations}
\begin{align}
\left\lfloor \frac{-\scal{x_0}{u}+\beta-(n+1)\beta_1+\beta_2}{\beta_2-\beta_1} \right\rfloor 
&> \frac{-\scal{x_0}{u}+\beta-n\beta_1+\beta_2}{\beta_2-\beta_1} \\
&= \frac{-\scal{x_0}{u}+\beta-(n+1)\beta_1+\beta_2}{\beta_2-\beta_1}+ \frac{\beta_1}{\beta_2-\beta_1} \\
&> \left\lfloor \frac{-\scal{x_0}{u}+\beta-(n+1)\beta_1+\beta_2}{\beta_2-\beta_1} \right\rfloor- 1. 
\end{align}
\end{subequations}
It follows that
\begin{equation}
\left\lfloor \frac{-\scal{x_0}{u}+\beta-n\beta_1+\beta_2}{\beta_2-\beta_1} \right\rfloor
= \left\lfloor \frac{-\scal{x_0}{u}+\beta-(n+1)\beta_1+\beta_2}{\beta_2-\beta_1} \right\rfloor- 1,
\end{equation}  
and we have \eqref{e:x} and \eqref{e:kn}. The proof is complete.
\end{proof}

\begin{corollary}
\label{c:closedform}
Suppose that $\beta_1> \beta\geq -\beta_2$, that $x_0\in A$, and that $2\scal{x_0}{b_1-b_2}> \|b_1\|^2- \|b_2\|^2$. Then
\begin{equation}
\label{e:simplify}
(\forall n\in \NN)\quad \scal{x_n}{u}= n\beta_1+ \left\lfloor \frac{\beta-(n+1)\beta_1+\beta_2}{\beta_2-\beta_1} \right\rfloor (\beta_2- \beta_1) 
\end{equation}
and 
\begin{equation}
\label{e:x_simplify}
(\forall n\in \NN^*)\quad x_n= \left( (n-1)\beta_1+ \left\lfloor \frac{\beta-n\beta_1+\beta_2}{\beta_2-\beta_1} \right\rfloor (\beta_2- \beta_1) \right)u+ b_{k(n)}, 
\end{equation}
where 
\begin{equation}
\label{e:kn_simplify}
(\forall n\in \NN^*)\quad k(n)= \left\lfloor \frac{\beta-(n+1)\beta_1+\beta_2}{\beta_2-\beta_1} \right\rfloor- \left\lfloor \frac{\beta-n\beta_1+\beta_2}{\beta_2-\beta_1} \right\rfloor+ 1.
\end{equation}  
\end{corollary}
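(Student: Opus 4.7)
The plan is to reduce this corollary to Theorem~\ref{t:closedform} by showing that the hypotheses $\beta_1 > \beta \geq -\beta_2$, $x_0 \in A$, and $2\scal{x_0}{b_1-b_2} > \|b_1\|^2 - \|b_2\|^2$ together imply both hypotheses of that theorem, and then substituting $\scal{x_0}{u} = 0$ into its conclusions. The hypothesis $\beta \geq -\beta_2$ is precisely $\beta + \beta_2 \geq 0$, and since $A = \{u\}^\perp$, Fact~\ref{f:A}\ref{f:A_P} gives $\scal{x_0}{u} = 0$. Plugging $\scal{x_0}{u} = 0$ into \eqref{e:xu}, \eqref{e:x}, and \eqref{e:kn} of Theorem~\ref{t:closedform} then yields exactly \eqref{e:simplify}, \eqref{e:x_simplify}, and \eqref{e:kn_simplify}.

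The heart of the argument is to verify the initialization condition $x_1 \in S_1\cup S_2$. Since $x_0 \in A$, we have $P_A x_0 = x_0$ and $R_A x_0 = x_0$, so by \eqref{e:PAsingle}, $x_1 \in P_B x_0 \subseteq \{b_1, b_2\}$. A straightforward expansion gives
\begin{equation*}
\|x_0 - b_1\|^2 - \|x_0 - b_2\|^2 = -2\scal{x_0}{b_1-b_2} + \|b_1\|^2 - \|b_2\|^2,
\end{equation*}
so the third hypothesis is equivalent to $\|x_0 - b_1\| < \|x_0 - b_2\|$; hence $x_1 = b_1$, $k(1) = 1$, and $\scal{x_1}{u} = \beta_1$. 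To place $x_1$ in $S_1$, I then need $\beta_1 \in \left]\beta, \beta+\beta_2\right]$: the left inequality is immediate from $\beta_1 > \beta$, and the right follows from $\beta_1 < 0 \leq \beta + \beta_2$.

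Once Theorem~\ref{t:closedform} applies, it delivers \eqref{e:simplify}, \eqref{e:x_simplify}, and \eqref{e:kn_simplify} for all $n \in \NN^*$. The only remaining subtlety is that \eqref{e:simplify} is stated for $n \in \NN$, not $\NN^*$, so the case $n = 0$ must be checked separately. Substituting $n = 0$ and $\scal{x_0}{u} = 0$ into the right-hand side of \eqref{e:simplify} reduces the floor's argument to $1 + \beta/(\beta_2 - \beta_1)$; since $\beta \in \left[-\beta_2, 0\right[$ and $\beta_1 - \beta_2 < -\beta_2$, this argument lies in $\left[0, 1\right[$, so the floor vanishes and the right-hand side evaluates to $0 = \scal{x_0}{u}$.

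The main (modest) obstacle will simply be bookkeeping: keeping straight the directions of the various strict and non-strict inequalities that locate $\beta_1$, $\beta$, and $\beta + \beta_2$ on the real line, in order to cleanly verify $x_1 \in S_1$ and the $n = 0$ boundary case. No new ideas beyond Theorem~\ref{t:closedform} appear to be required.
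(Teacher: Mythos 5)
Your proposal is correct and follows essentially the same route as the paper's proof: verify $\beta+\beta_2\geq 0$, show $x_1=b_1$ with $\scal{x_1}{u}=\beta_1\in\left]\beta,\beta+\beta_2\right]$ so that $x_1\in S_1$, apply Theorem~\ref{t:closedform}, and handle $n=0$ by noting the floor term vanishes. No gaps.
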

\begin{proof}
From $x_0\in A$, we have that $\scal{x_0}{u}= 0$ and also $R_Ax_0= P_Ax_0= x_0$. 
Since $2\scal{x_0}{b_1-b_2}> \|b_1\|^2- \|b_2\|^2$, 
it holds that $\|b_1-x_0\|^2< \|b_2-x_0\|^2$, which yields $P_BR_Ax_0= P_Bx_0= b_1$. 
Therefore, $k(1)= 1$, $x_1= x_0- P_Ax_0 +P_BR_Ax_0= b_1$, and $\scal{x_1}{u}= \scal{b_1}{u}= \beta_1$. 

On the other hand, it follows from $\beta_1> \beta\geq -\beta_2$
and $\beta_1< 0$ that $\beta+ \beta_2\geq 0$ and that $\beta<
\beta_1<0 \leq \beta+ \beta_2$. We deduce that $\scal{x_1}{u}=
\beta_1\in \left]\beta, \beta+ \beta_2 \right[$, which implies
that $x_1\in S_1$. Using Theorem~\ref{t:closedform}, we get
\eqref{e:simplify} for all $n\in \NN^*$. When $n= 0$, 
the right-hand side of \eqref{e:simplify} becomes
\begin{equation}
\left\lfloor \frac{\beta-\beta_1+\beta_2}{\beta_2-\beta_1} \right\rfloor (\beta_2- \beta_1) =0= \scal{x_0}{u}
\end{equation} 
since $0< \beta-\beta_1+\beta_2< \beta_2-\beta_1$. Hence, \eqref{e:simplify} holds for all $n\in \NN$, which together with the second part of Theorem~\ref{t:closedform} completes the proof.
\end{proof}

\begin{example}
\label{ex:R}
Suppose that $X= \RR$, that $A= \{0\}$, and that $B= \{b_1, b_2\}$ with $b_1= -1$ and $b_2= r$, where $r\in \RR$, $r> 1$. Let $(x_n)_\nnn$ be a DR sequence with respect to $(A, B)$ with starting point $x_0= 0$. Then
\begin{equation}
(\forall n\in \NN)\quad x_n= 
-n+ \left\lfloor \frac{n}{r+1} + \frac{1}{2} \right\rfloor (r+1).
\end{equation} 
\end{example}
\begin{proof}
Let $u= 1$. Then $A= \{u\}^\perp$ and $(\forall x\in \RR)$ $\scal{x}{u}= x$. 
We have that $\beta_1= \scal{b_1}{u}= -1< 0$, $\beta_2= \scal{b_2}{u}= r >0$, and, since $r> 1$, 
\begin{equation}
-1=\beta_1> \beta= \frac{|b_1-b_2|^2}{2(\beta_1-\beta_2)}=
-\frac{(r+1)^2}{2(r+1)}= -\frac{r+1}{2}> -\beta_2=-r.
\end{equation}  
It is clear that $x_0= 0\in A$ and that 
$2\scal{x_0}{b_1-b_2}= 0> 1- r^2= |b_1|^2- |b_2|^2$. 
Now applying Corollary~\ref{c:closedform} yields
\begin{equation}
(\forall n\in \NN)\quad x_n= \scal{x_n}{u}= -n+ \left\lfloor \frac{-\frac{r+1}{2}+(n+1)+r}{r+1} \right\rfloor (r+1), 
\end{equation} 
and the conclusion follows.
\end{proof}

\begin{example}
\label{ex:R2} 
Suppose that $X= \RR^2$, that $A= \RR\times \{0\}$, and that $B= \{b_1, b_2\}$ with $b_1= (0, -1)$ and $b_2= (1, r)$, where $r\in \RR$, $r\geq \sqrt{2}$. Let $(x_n)_\nnn$ be a DR sequence with respect to $(A, B)$ with starting point $x_0= (\alpha, 0)$, where $\alpha\in \RR$, $\alpha< r^2/2$. 
Then $(\forall n\in \NN^*)$:
\begin{multline}
x_n= \left(\left\lfloor \frac{n}{r+1}+ \frac{r^2+2r}{2(r+1)^2} \right\rfloor- \left\lfloor \frac{n-1}{r+1}+ \frac{r^2+2r}{2(r+1)^2} \right\rfloor, -n+ \left\lfloor \frac{n}{r+1}+ \frac{r^2+2r}{2(r+1)^2} \right\rfloor (r+1) \right).
\end{multline} 
\end{example}
\begin{proof}
In this case, $A= \{u\}^\perp$ with $u= (0, 1)$, $\beta_1= \scal{b_1}{u}= -1< 0$, $\beta_2= \scal{b_2}{u}= r >0$, and 
\begin{equation}
\beta_1= -1> \beta= \frac{\|b_1-b_2\|^2}{2(\beta_1-\beta_2)}= -\frac{1+ (r+1)^2}{2(r+1)}= -1- \frac{r^2}{2(r+1)}. 
\end{equation} 
On the one hand, $\beta+ \beta_2= \frac{r^2-2}{2(r+1)}\geq 0$. 
On the other hand, it is straightforward to see that $x_0\in A$
and that $2\scal{x_0}{b_1-b_2} = -2\alpha>-r^2= \|b_1\|^2 -\|b_2\|^2$. 
Applying Corollary~\ref{c:closedform}, we obtain that
\begin{subequations}
\begin{align}
(\forall n\in \NN^*)\quad \scal{x_n}{u}&= -n+ \left\lfloor \frac{-1-\frac{r^2}{2(r+1)}+(n+1)+r}{r+1} \right\rfloor (r+1) \\
&= -n+ \left\lfloor \frac{n}{r+1}+ \frac{r^2+2r}{2(r+1)^2} \right\rfloor (r+1). 
\end{align}
\end{subequations}
Now for each $n\in \NN^*$, writing $x_n= (\alpha_n, \beta_n)\in \RR^2$, we observe that $\beta_n= \scal{x_n}{u}$ and, by \eqref{e:x_simplify}, $\alpha_n$ is actually the first coordinate of $b_{k(n)}$, that is,
\begin{equation}
\alpha_n= \begin{cases}
0 &\text{if~} k(n)= 1, \\
1 &\text{if~} k(n)= 2,
\end{cases}
\end{equation} 
which combined with \eqref{e:kn_simplify} implies that
\begin{equation}
\alpha_n= k(n)- 1= \left\lfloor \frac{n}{r+1}+ \frac{r^2+2r}{2(r+1)^2} \right\rfloor- \left\lfloor \frac{n-1}{r+1}+ \frac{r^2+2r}{2(r+1)^2} \right\rfloor. 
\end{equation}
The conclusion follows.
\end{proof}

Let us specialize Example~\ref{ex:R} further and
also illustrate Theorem~\ref{t:2points}. 

\begin{example}[rational case]
Suppose that $X= \RR$, that $A= \{0\}$, and that $B= \{-1,
{2}\}$. Let $(x_n)_\nnn$ be a DR sequence with respect to $(A, B)$ with starting point $x_0= 0$. Then
\begin{equation}
(\forall n\in \NN)\quad x_n= -n+ 3 \left\lfloor
\frac{n}{3}+\frac{1}{2} \right\rfloor
\end{equation} 
and $(x_n)_\nnn = \big(
0,
-1,
1,
0,
-1,
1,
0,
-1,
1,
\ldots
\big)$ is periodic. (See also \cite[Remark~6]{BN14} for another
cyclic example.)
\end{example}
\begin{proof}
Apply Example~\ref{ex:R} with $b_1=-1$ and $b_2=2$. 
\end{proof}

\begin{example}[irrational case]
Suppose that $X= \RR$, that $A= \{0\}$, and that $B= \{-1,
\sqrt{2}\}$. Let $(x_n)_\nnn$ be a DR sequence with respect to $(A, B)$ with starting point $x_0= 0$. Then
\begin{equation}
(\forall n\in \NN)\quad x_n= -n+ \left\lfloor
\frac{n}{\sqrt{2}+1}+\frac{1}{2} \right\rfloor (\sqrt{2}+1)
\end{equation} 
and $(x_n)_\nnn = \big(
0,
-1,
-1+\sqrt{2},
-2+\sqrt{2},
-2+2\sqrt{2},
-3+2\sqrt{2},
-4+2\sqrt{2},
-4+3\sqrt{2},
\ldots
\big)$ which is not periodic. 
\end{example}
\begin{proof}
Apply Example~\ref{ex:R} with $b_1=-1$ and $b_2=\sqrt{2}$. 
\end{proof}

\begin{remark}
Some comments on the last examples are in order.
\begin{enumerate}
\item
We note that the last examples feature terms 
resembling (inhomogeneous) Beatty sequences; see \cite{Hav12}. 
In fact, let us disclose that we started this journey 
by experimentally investigating Example~\ref{ex:R2} 
which eventually led to the more general analysis in this paper.
Specifically, in Example~\ref{ex:R2}, if $r= \sqrt{2}$, then $x_n= (u_n, -v_n+w_n\sqrt{2})$, 
where the integer sequences
\begin{subequations}
\begin{align}
u_n&:= \lfloor (n+1)(\sqrt{2}-1) \rfloor- \lfloor n(\sqrt{2}-1) \rfloor= \lfloor (n+1)\sqrt{2} \rfloor- \lfloor n\sqrt{2} \rfloor- 1,\\ 
v_n&:= n-\lfloor (n+1)(\sqrt{2}-1) \rfloor= \lfloor (n+1)(2-\sqrt{2}) \rfloor,\\ 
w_n&:= \lfloor (n+1)(\sqrt{2}-1) \rfloor= \lfloor (n+1)\sqrt{2} \rfloor- n- 1
\end{align}
\end{subequations}
are respectively listed as \cite{OEIS_A188037}, \cite{OEIS_A074840}, and \cite{OEIS_A097508} 
(shifted by one) in the \emph{On-Line Encyclopedia of Integer Sequences}. 
\item
Finally, let us contrast the DR algorithm to the method of
alternating projections (see, e.g., \cite{BB96} and \cite{BC17})
in the setting of Example~\ref{ex:R}: indeed, the 
sequence $(x_0,P_Ax_0,P_BP_Ax_0,\ldots)$ is simply
$(0,0,-1,0,-1,0,\ldots)$ regardless of whether or not $r>1$ is irrational. 
It was also suggested in \cite{BDNP16a} that, 
for the convex feasibility problem, the DR algorithm 
outperforms the method of alternating projections 
in the absence of constraint qualifications.
\end{enumerate}
\end{remark}

\section{Conclusion}
\label{s:conclusion}

In this paper, we provided a detailed analysis of the
Douglas--Rachford algorithm for the case when one set is a
hyperplane and the other a doubleton. We
characterized cycling of this method in terms of the ratio of 
the distances of the points to the hyperplane. Moreover, we
presented closed-form expressions of the actual iterates. The
results obtained show the surprising complexity of this 
algorithm
when compared to, e.g., the method of alternating projections.

\subsection*{Acknowledgments}
HHB was partially supported by the Natural Sciences and
Engineering Research Council of Canada.
MND was partially supported by the Australian Research Council.

\end{document}